\theoremstyle{plain}
\newtheorem{theorem}{Theorem}[section]
\newtheorem{lemma}[theorem]{Lemma}
\newtheorem{proposition}[theorem]{Proposition}
\theoremstyle{definition}
\newtheorem{definition}[theorem]{Definition}
\newtheorem*{properties*}{Properties}
\newenvironment{definition*}[1][Definition]{\begin{trivlist}
\item[\hskip \labelsep {\bfseries #1}]}{\end{trivlist}}
\numberwithin{equation}{section}
\DeclareMathOperator{\maj}{maj}
\DeclareMathOperator{\inv}{inv}
\DeclareMathOperator{\des}{des}
\DeclareMathOperator{\Des}{Des}
\DeclareMathOperator{\E}{\mathbb{E}}
\DeclareMathOperator{\Var}{Var}
\begin{document}
\title{Tree Descent Polynomials: Unimodality and Central Limit Theorem}
\author{
Amy Grady and Svetlana Poznanovi\'c  \\ [6pt]
School of Mathematical and Statistical Sciences\\
Clemson University, Clemson, SC 29634  \\
}
\date{}
\maketitle
\begin{abstract} For a  poset whose Hasse diagram is a rooted plane forest $F$, we consider the corresponding tree descent polynomial $A_F(q)$, which is a generating function of the number of descents of the labelings of $F$.  When the forest is a path, $A_F(q)$ specializes to the classical Eulerian polynomial. We prove that the coefficient sequence of $A_F(q)$ is unimodal and that if $\{T_{n}\}$ is a sequence of trees with $|T_{n}| = n$ and maximal down degree $D_{n} = O(n^{0.5-\epsilon})$ then the number of descents in a labeling of $T_{n}$ is asymptotically normal. \end{abstract}

{\renewcommand{\thefootnote}{} \footnote{\emph{E-mail addresses}:
agrady@clemson.edu (A. Grady), spoznan@clemson.edu (S.~Poznanovi\'c)}}


\section{Introduction} \label{S:introduction}
In this paper we generalize the Eulerian polynomials by considering descent polynomials of rooted forests. Throughout this paper, we will use $F$ to denote a plane rooted forest, whose roots we will draw on top. Let $V(F)$ be the vertex set of $F$. A labeling $w$ of $F$ of size $n$ is a bijection
\[w: V(F) \rightarrow \{1, \dots, n\}.\] The \emph{descent set} of a labeled forest is
\[ \Des(F,w) = \{v \in V(F) : w(v) > w(u), u \text{  is the parent of } v\} \] and its cardinality is denoted by $\des(F,w)$. For example, for the labeled tree $(T,w_{1})$ in Figure~\ref{fig: des poly example},  for the two child-parent pairs $(v_{4}, v_{6})$  and $(v_{5}, v_{6})$ we have $w_{1}(v_{4}) = 6  > 2 = w_{1}(v_{6})$ and  $w_{1}(v_{5}) = 5  > 2 = w_{1}(v_{6})$. So, $\Des(T,w_{1}) = \{ v_{4}, v_{5} \}$.  Similarly, for the second labeling we get $\Des(T,w_{2}) = \{ v_{2}, v_{3}, v_{4}, v_{5} \}$. The roots are never in the descent set because they don't have parents.

Let $\mathcal{W}(F)$ be the set of all $n!$ labelings of the forest $F$.

\begin{definition} For a rooted forest $F$, the {\em descent polynomial} is  \[A_F(q) = \sum_{w \in \mathcal{W}(F)} q^{\des(F,w)}.\]
\end{definition}

\begin{figure}
\centering
\begin{tikzpicture}
\draw (0,0)--(1,2)--(2,4)--(3,2)--(3,0);
\draw (1,2)--(2,0);
\draw [fill] (0,0) circle [radius=0.05];
\draw [fill] (1,2) circle [radius=0.05];
\draw [fill] (2,0) circle [radius=0.05];
\draw [fill] (2,4) circle [radius=0.05];
\draw [fill] (3,2) circle [radius=0.05];
\draw [fill] (3,0) circle [radius=0.05];
\node [below] at (0,0) {$v_1$};
\node [below] at (2,0) {$v_2$};
\node [below] at (3,0) {$v_3$};
\node [right] at (3,2) {$v_4$};
\node [left] at (1,2) {$v_5$};
\node [above] at (2,4) {$v_6$};

\begin{scope}[shift={(5,0)}]
\draw (0,0)--(1,2)--(2,4)--(3,2)--(3,0);
\draw (1,2)--(2,0);
\draw [fill] (0,0) circle [radius=0.05];
\draw [fill] (1,2) circle [radius=0.05];
\draw [fill] (2,0) circle [radius=0.05];
\draw [fill] (2,4) circle [radius=0.05];
\draw [fill] (3,2) circle [radius=0.05];
\draw [fill] (3,0) circle [radius=0.05];
\node [below] at (0,0) {$1$};
\node [below] at (2,0) {$3$};
\node [below] at (3,0) {$4$};
\node [right] at (3,2) {$6$};
\node [left] at (1,2) {$5$};
\node [above] at (2,4) {$2$};
\end{scope}

\begin{scope}[shift={(10,0)}]
\draw (0,0)--(1,2)--(2,4)--(3,2)--(3,0);
\draw (1,2)--(2,0);
\draw [fill] (0,0) circle [radius=0.05];
\draw [fill] (1,2) circle [radius=0.05];
\draw [fill] (2,0) circle [radius=0.05];
\draw [fill] (2,4) circle [radius=0.05];
\draw [fill] (3,2) circle [radius=0.05];
\draw [fill] (3,0) circle [radius=0.05];
\node [below] at (0,0) {$2$};
\node [below] at (2,0) {$5$};
\node [below] at (3,0) {$6$};
\node [right] at (3,2) {$3$};
\node [left] at (1,2) {$4$};
\node [above] at (2,4) {$1$};
\end{scope}

\end{tikzpicture}
\caption{A tree $T$ with vertex set $V = \{v_{1}, v_{2}, \dots, v_{6}\}$ and two of its labelings, $w_{1}$ and $w_{2}$.} 
\label{fig: des poly example}
\end{figure}
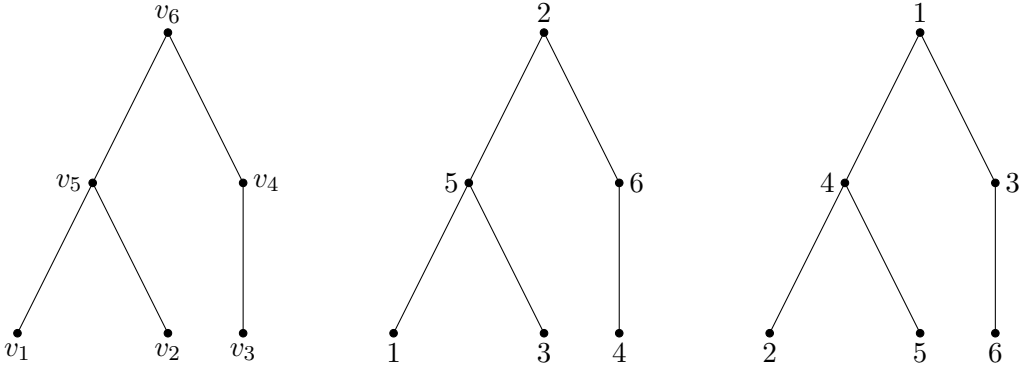

If $F$ is a linear path with $n$ vertices, $A_F(q)$ is equal to the $n$-th Eulerian polynomial $A_{n}(q)$, up to a factor of $q$.  $A_{n}(q)$  has only (negative and simple) real roots, a result due to Frobenius~\cite{frobenius1910uober}. As a consequence, the sequence of coefficients of $A_{n}(q)$ is log-concave and in turn unimodal. A general descent polynomial $A_F(q)$ may have non-real roots.  For example, for the tree $T$ in Figure \ref{fig: des poly example}, the descent polynomial $A_T(q) = 20q^5+90q^4+250q^3+250q^2+90q+20$ has only one real root, $q=-1$. Here we prove that the coefficients of $A_F(q)$ form a palindromic and unimodal sequence (Section \ref{sec: uni}). Then in Section \ref{sec: normal} we prove that the distribution of descents in a tree $T_{n}$ with $n$ vertices is asymptotically normal if the maximum down degree is $D_{n} = O(n^{0.5-\epsilon})$. We remark that if a tree $P_{n}$ is picked random from the set of $n^{n-2}$ trees with $n$ labeled nodes and $E(D)$ denotes the expected value of the maximum degree of $P_{n}$ then $E(D) \sim \frac{\log n}{ \log \log n}$ as $n \to \infty$~\cite{moon1968}.

The coefficient sequences of the descent polynomials of all the random trees that we have checked up to size 11 were all log-concave, but we don't have a proof of that.  Direct combinatorial proofs of the log-concavity of the Eulerian polynomials  have been gound  by Gasharov in \cite{GASHAROV1998} and by B\'ona and Ehrenborg in \cite{BONA2000}. Their proofs use bijections between permutations of size $n$ with $k$ descents and the set of labeled northeastern lattice paths with $n$ edges, exactly $k$ of which are vertical. Unfortunately, we don't see a way to extend that idea to our setting.

Generalizations of MacMahon's formula for $q$-counting inversions and major index in the setting of labelings of a fixed forest has been considered in~\cite{BJORNER1989}. For a graph $G = ([n], E)$ and a permutation $\sigma \in S_{n}$, define the graphical inversion number~\cite{FOATA1996} $\mathrm{inv}_{G}(\sigma)$ to be the number of edges $\{\sigma(i), \sigma(j)\} \in E$ such that $1 \leq i < j \leq n$ and
$\sigma(i) > \sigma(j)$. (So, $\inv_{G}$ generalizes the well-studied inversion statistic.) Suppose the vertex set of $F$ is $[n]$ and that $i < j$ whenever $i <_{F} j$. Counting the descents over all labelings of $F$ is equivalent to counting the graphical inversions over $S_{n}$ in the underlying graph $G$ of the Hasse diagram $F$.  When $G$ is the incomparability graph of a poset $P$ (an edge in $G$ corresponds to a pair of incomparable elements in $P$), the polynomial $A_{G}(q, p, t) := \sum_{\sigma \in S_{n}} t^{\inv_{G}(\sigma)}q^{\maj_{P} (\sigma)}p^{\des_{P} (\sigma)}$ has been studied in~\cite{shareshian2016chromatic}. On the other hand, we note that the count by descents and leaves in~\cite{gessel1996counting} as well as the tree Eulerian polynomial considered in~\cite{d2016note} is the sum over all labeled trees of size $n$ and therefore is different from the polynomials considered here.


\section{Symmetry and Unimodality}\label{sec: uni}

In this section we prove that the descent polynomials $A_{F}(q)$ is unimodal. The following result on the product of symmetric unimodal polynomials is used in the proof. A polynomial $f(x) = \sum_{i=0}^na_ix^i$ is \emph{symmetric}, or \emph{palindromic}, if $a_i=a_{n-i}$ for $i=0,1,\ldots ,n$.

\begin{proposition}[\cite{STANLEY1989}]\label{prop: unimodal product}
If $A(q)$ and $B(q)$ are symmetric unimodal polynomials with nonnegative coefficients, then so is $A(q)B(q)$.
\end{proposition}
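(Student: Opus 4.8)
The plan is to reduce the statement to an easy-to-analyze family of ``building block'' polynomials and then to exploit the fact that, unlike unimodality in general, unimodality \emph{is} preserved under sums of polynomials that are all symmetric about a common center. Nonnegativity of the coefficients of $A(q)B(q)$ is immediate, and symmetry is equally routine: if $A$ has degree $m$ and $B$ has degree $n$, then $q^{m}A(1/q)=A(q)$ and $q^{n}B(1/q)=B(q)$ give $q^{m+n}(AB)(1/q)=(AB)(q)$, so $A(q)B(q)$ is symmetric of degree $m+n$, centered at $(m+n)/2$. The content is therefore entirely in unimodality.

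First I would record the decomposition of an arbitrary symmetric unimodal polynomial with nonnegative coefficients into a nonnegative combination of the intervals
\[ \Gamma^{(N)}_i(q) = q^i + q^{i+1} + \cdots + q^{N-i}, \qquad 0 \le i \le \lfloor N/2 \rfloor. \]
Writing $A(q)=\sum_{k=0}^m a_kq^k$ and setting $c_i = a_i - a_{i-1}$ (with $a_{-1}=0$), unimodality together with symmetry forces $c_i \ge 0$ for $0 \le i \le \lfloor m/2\rfloor$, and a short telescoping check shows $A(q)=\sum_i c_i\,\Gamma^{(m)}_i(q)$. Applying the same decomposition to $B(q)=\sum_j d_j\,\Gamma^{(n)}_j(q)$ with $d_j \ge 0$ and using bilinearity gives
\[ A(q)B(q)=\sum_{i,j} c_i d_j\,\Gamma^{(m)}_i(q)\,\Gamma^{(n)}_j(q). \]

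Next I would analyze a single product $\Gamma^{(m)}_i(q)\,\Gamma^{(n)}_j(q)$. Its coefficient sequence is the convolution of two ``rectangular'' sequences (a block of $1$'s), and such a convolution is a trapezoidal sequence: it rises by $1$, is then constant, then falls by $1$. This is manifestly unimodal, and it is symmetric about $\tfrac{m}{2}+\tfrac{n}{2}=\tfrac{m+n}{2}$ — crucially, the \emph{same} center for every pair $(i,j)$. Finally I would invoke the elementary fact that a nonnegative linear combination of polynomials that are each symmetric and unimodal about one common center $c$ is again symmetric and unimodal about $c$: each summand's coefficient sequence is nondecreasing up to $c$, so the sum's is as well, and symmetry handles the other half. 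Combining this with the displayed expansion finishes the proof.

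The main obstacle is conceptual rather than computational: the product of two unimodal polynomials need not be unimodal, so the argument must avoid multiplying the unimodal factors directly. The decomposition into common-center intervals is exactly what circumvents this, converting the product into a nonnegative sum of trapezoids sharing a single center of symmetry, after which the common-center summation lemma applies. The two points that must be checked carefully are that the convolution of two rectangular pulses is genuinely trapezoidal (in particular, has no interior dip) and that every product $\Gamma^{(m)}_i\,\Gamma^{(n)}_j$ is centered at $(m+n)/2$ independently of $i$ and $j$.
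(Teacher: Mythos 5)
Your proof is correct: the telescoping decomposition $A(q)=\sum_i c_i\,\Gamma^{(m)}_i(q)$ is valid (symmetry plus unimodality force the coefficient sequence to be nondecreasing up to $\lfloor m/2\rfloor$, so $c_i\ge 0$, and symmetry of the $\Gamma^{(m)}_i$ recovers the coefficients past the center), the convolution of two blocks of $1$'s is genuinely trapezoidal with no interior dip, every product $\Gamma^{(m)}_i\Gamma^{(n)}_j$ is centered at $(m+n)/2$ independently of $(i,j)$, and the lemma that a nonnegative sum of polynomials symmetric and unimodal about a \emph{common} center is again symmetric and unimodal about that center is sound. Be aware, though, that the paper contains no proof of this proposition to compare against --- it is imported verbatim from Stanley's survey \cite{STANLEY1989} --- and your argument is essentially the classical one from that source: the decomposition into common-center symmetric intervals is precisely the standard device for circumventing the fact that unimodality is not preserved by products (or by sums with mismatched centers) in general. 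So your write-up supplies a self-contained proof where the paper supplies only a citation, and it does so by the expected route.
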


First we show that $A_F(q)$ is symmetric.

\begin{lemma}\label{cor: symmetric}
For a forest $F$, the descent polynomial $A_F(q)$ is symmetric.
\end{lemma}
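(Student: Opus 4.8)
The plan is to prove symmetry by exhibiting a descent-reversing involution on the set of labelings. First I would pin down the degree of $A_F(q)$. Since the roots have no parents, only the non-root vertices can lie in the descent set, and each contributes at most one descent; hence $\des(F,w)$ is bounded above by the number $N$ of edges of $F$. Both extreme values are attained: labeling the vertices by any linear extension of the forest poset (with the roots as minimal elements) assigns every parent a smaller label than all of its descendants, so every edge is a descent and $\des(F,w) = N$, while the reversed assignment gives $\des(F,w) = 0$. Thus $A_F(q)$ has degree exactly $N$, and symmetry will mean $a_k = a_{N-k}$ for all $k$.

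Next I would introduce complementation. For a labeling $w$ of size $n$, define $w'$ by $w'(v) = n+1-w(v)$. This is again a bijection $V(F) \to \{1,\dots,n\}$, and the map $w \mapsto w'$ is an involution of $\mathcal{W}(F)$. The key observation is that for any child-parent pair $(v,u)$, the labels being distinct, we have $w'(v) > w'(u)$ if and only if $w(v) < w(u)$. In other words, $v$ is a descent of $(F,w')$ precisely when $v$ is not a descent of $(F,w)$. Running this over all $N$ edges gives $\des(F,w') = N - \des(F,w)$.

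From here the conclusion is immediate: the involution $w \mapsto w'$ restricts to a bijection between the labelings with exactly $k$ descents and those with exactly $N-k$ descents, so the coefficient of $q^k$ in $A_F(q)$ equals the coefficient of $q^{N-k}$. By the definition of a symmetric (palindromic) polynomial, this shows $A_F(q)$ is symmetric, which is exactly the statement of Lemma~\ref{cor: symmetric}, and it also sets up the application of Proposition~\ref{prop: unimodal product} to come.

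There is no serious obstacle here; the argument is entirely a single clean involution. The only point requiring a moment's care is the preliminary claim that the maximal number of descents equals $N$, so that the palindromic relation is stated with respect to the true degree of the polynomial; this is guaranteed by the existence of a linear extension of the forest, as noted above.
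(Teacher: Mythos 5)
Your proof is correct and follows essentially the same route as the paper: the complementation involution $w \mapsto w'$ with $w'(v) = n+1-w(v)$, which swaps descents and ascents along every edge and hence exchanges the coefficients $a_k$ and $a_{N-k}$. Your extra verification that the extreme values $0$ and $N$ are actually attained (via a linear extension) is a small, sound refinement the paper leaves implicit.
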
 

\begin{proof}
Let $F$ be a forest with $n$ vertices and $m$ edges. For a labeling $w\in\mathcal{W}(F)$, define $w'\in\mathcal{W}(F)$ by $w'(x)=n+1-w(x)$. Then for a vertex $x$ with a child $y$, clearly $w(x) < w(y)$ if and only if $w'(x)>w'(y)$. Therefore every descent in $(F,w)$ corresponds to an ascent in $(F,w')$ and vice versa. Therefore, the number of labelings with $k$ descents is equal to the number of labelings with $m-k-1$ descents.
\end{proof}

Now we are ready for the main result of this section. In what follows, the \emph{down-degree} of a vertex $v$ is the number of children of $v$. For example, the down-degree of $v_{5}$ in the tree $T$ in Figure~\ref{fig: des poly example} is 2.

\begin{theorem}
For a forest $F$, the descent polynomial $A_F(q)$ is unimodal.
\end{theorem}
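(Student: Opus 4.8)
The plan is to reduce to a single tree and then induct on the number of vertices, using Proposition~\ref{prop: unimodal product} to assemble the subtree contributions. First I would record the multiplicativity of $A_F$ over connected components: if $F = T_1 \sqcup \cdots \sqcup T_c$ with $|T_j| = n_j$ and $n = \sum_j n_j$, then a labeling of $F$ amounts to a choice of which labels go to each component together with a relative labeling of each component, and since the descent count depends only on relative orders within components, $A_F(q) = \binom{n}{n_1, \ldots, n_c}\prod_{j=1}^c A_{T_j}(q)$. Because the multinomial coefficient is a positive constant and each $A_{T_j}$ is symmetric by Lemma~\ref{cor: symmetric}, Proposition~\ref{prop: unimodal product} shows that it suffices to prove unimodality when $F$ is a single rooted tree $T$.

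For a tree $T$ with root $r$ whose children are the roots $c_1, \ldots, c_k$ of subtrees $T_1, \ldots, T_k$, deleting $r$ leaves the forest $F' = T_1 \sqcup \cdots \sqcup T_k$, whose descent polynomial is symmetric and unimodal by the first paragraph and the inductive hypothesis. The descents of $T$ split as the internal descents of $F'$ plus the descents on the $k$ root edges, where $(r, c_i)$ is a descent exactly when $w(c_i) > w(r)$. Inserting $r$ at rank $j$ into a labeling of $F'$ makes $(r,c_i)$ a descent precisely when the rank of $c_i$ is at least $j$, so summing over $j$ yields an identity for $A_T$, but the number of root-edge descents depends on the ranks of the $c_i$, not only on the internal descent count. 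I would therefore strengthen the induction to carry the refined generating function $f_T(q,x) = \sum_w q^{\des(T,w)} x^{w(r)}$ recording jointly the descent number and the rank of the root; the complementation bijection $w \mapsto w'$ with $w'(v) = |T|+1-w(v)$ from Lemma~\ref{cor: symmetric} shows $f_T$ obeys the joint palindromy $f_T(q,x) = q^{\,|T|-1} x^{\,|T|+1} f_T(1/q, 1/x)$, and setting $x = 1$ recovers $A_T(q)$.

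The combinatorial core is to express $f_T$ through the $f_{T_i}$: shuffling the subtrees preserves their internal descents and so multiplies their $q$-generating functions, letting Proposition~\ref{prop: unimodal product} govern the subtree product, while inserting the root applies an explicit operator that reads off, for each $i$, whether $c_i$ lands above or below $r$. The main obstacle is exactly this root-insertion step: because it couples the descent statistic with the ranks of the subtree roots, it is not a bare product of symmetric unimodal polynomials, and the naive recursion produces rank-weighted descent polynomials such as $\partial_x f_T(q,x)\big|_{x=1}$ that are not manifestly unimodal. The crux will be to show that the insertion operator maps the class of bivariate polynomials satisfying the joint palindromy together with a suitable unimodality condition back into itself; I expect to establish this by combining the symmetry above with Proposition~\ref{prop: unimodal product}, applied after grouping the insertion positions so that each group contributes a symmetric unimodal factor.
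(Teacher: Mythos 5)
Your reduction to a single tree is exactly the paper's first step, and it is correct. But your treatment of the tree case is a plan with the crucial step missing, and that step is precisely where the difficulty lives. You delete the root $r$, correctly observe that the root-edge descents are coupled to the ranks of the children $c_1,\ldots,c_k$ rather than to the internal descent count, and then propose to repair this with the bivariate polynomial $f_T(q,x)=\sum_w q^{\des(T,w)}x^{w(r)}$ and an ``insertion operator'' that should preserve ``joint palindromy together with a suitable unimodality condition.'' That condition is never defined, and no argument is given that the operator preserves it --- you only say you expect to establish this. Worse, the single variable $x$ is not even enough bookkeeping: to build $f_T$ from the subtrees you need the joint distribution of the descents and the ranks of \emph{all} of $c_1,\ldots,c_k$ after the subtree labelings are shuffled together, and the shuffle does not act on $f_{T_1},\ldots,f_{T_k}$ by a plain product, since ranks change under interleaving. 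So the proposal stops exactly where a proof would have to begin, and it is not clear this route can be completed.

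The idea you are missing is to delete not the root but the vertex carrying the label $1$. If $w(v)=1$, then $v$ forms a descent with every one of its children and never with its parent, independently of all other labels; hence $\des(F,w)=\des(F_v,w')+d_v$ exactly, where $F_v=F-v$, $w'$ is the induced labeling of $F_v$, and $d_v$ is the down-degree of $v$. Summing over which vertex receives label $1$ gives the clean recurrence $A_F(q)=\sum_{v\in F}q^{d_v}A_{F_v}(q)$, with no coupling to ranks at all. Note that even this recurrence does not finish via Proposition~\ref{prop: unimodal product}, because the summands $q^{d_v}A_{F_v}(q)$ are symmetric about different centers; the paper instead compares the coefficients $a_k$ and $a_{k+1}$ of $A_F$ termwise across the sum, checking by a short case analysis (non-leaf $v$; leaf $v$ with $k+1\leq\lfloor\frac{n-2}{2}\rfloor$; and a boundary index where the symmetry of Lemma~\ref{cor: symmetric} forces equality) that each compared pair of coefficients of $A_{F_v}$ lies in its weakly increasing first half, so the inductive hypothesis applies term by term.
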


\begin{proof}
We will prove the claim by induction on the number of vertices in $F$. The base case is easy to check. Suppose $A_F(q)$ is unimodal for all forests $F$ of size less than  $n$. Consider a forest $F$ of size $n$ that consists of trees $T_1, \ldots, T_m$. We will consider the cases when $m>1$ and $m=1$ seperatly.

First suppose that $m>1$, or in other words, $F$ is not a tree. Then 
\[A_F(q)={n \choose k_1,\ldots,k_m}A_{T_1}(q)  \cdots A_{T_m}(q),\]
for $k_i$ is the size of $T_{i}$. By the inductive hypothesis, $A_{T_i}(q)$ is unimodal for all $i=1,\ldots,m$ and thus, by Proposition \ref{prop: unimodal product}, $A_{F}(q)$ is unimodal.

Now suppose that $m=1$, or in other words $F$, is a tree with $n$ vertices. 
For a vertex $v$,  let $F_v = F - v$ be the tree obtained by removing the vertex $v$ and incident edges from $F$. For $w \in \mathcal{W}(F)$, let $v$ be the vertex such that $w(v) = 1$. Consider the map defined by removing the vertex $v$ and its adjacent edges to get the forest $F_v$ and the labeling $w'\in \mathcal{W}(F_v)$ defined by $w'(x) = w(x)-1$ for all vertices $x$ in $F_v$ . This defines a bijection from $\mathcal{W}(F)$ to the set of pairs of a vertex $v$ and a labeling $w'\in \mathcal{W}(F_v)$. Notice that in $w$, $v$ creates a descent with all of its children since $w(v) = 1$ but it does not create a descent with its parent. So, we have

\begin{equation} \label{recurrence} A_F(q) = \sum_{w \in\mathcal{W}(F)}q^{\des(F, w)}= \sum_{v\in F} \sum_{w'\in\mathcal{W}(F_{v})} q^{\des(F_v, w')+d_v} = \sum_{v\in F} q^{d_{v}}A_{F_{v}}(q)\end{equation}
where $d_v$ is the down-degree of $v$.

Let $A_F(q) = a_0+a_1q+\cdots+a_{n-1}q^{n-1}$ and consider $a_k$ and $a_{k+1}$ for some $k+1\leq \lfloor \frac{n-1}{2}\rfloor$. Let $v_1,\ldots,v_{n+1}$ be the vertices of $F$, and let $A_{F_{v_i}}(q) = a_{i,0}+a_{i,1}q+a_{i,2}q^2+\cdots+a_{i,e_i}q^{e_i}$, where $e_i$ is the number of edges in $F_{v_i}$, i.e.,
\begin{equation} \label{edges} e_{i} = \begin{cases} n-1 -d_{v_{i}} \; \text{ if } v_{i}  \text{ is the root of } $F$\\
n - 2- d_{v_{i}} \; \text{ otherwise.}
\end{cases}\end{equation}
The coefficient $a_{i,j}$ is the number of labelings of the forest $F_{v_i}$ with $j$ descents and, in particular, $a_{i,j} =0$ if $j<0$. 
Using~\eqref{recurrence} we get,
\begin{equation}\label{ak} a_k=a_{1,k-d_{v_1}}+\cdots+a_{n,k-d_{v_n}} \end{equation}
 and, similarly,
 \begin{equation}\label{ak1} a_{k+1}=a_{1,k-d_{v_1}+1}+\cdots+a_{n,k-d_{v_n}+1}.\end{equation} Suppose that $k+1 \leq \lfloor \frac{n-1}{2}\rfloor$. We will show that $a_{k} \leq a_{k+1}$ by comparing the terms on the right hand-side of~\eqref{ak} and~\eqref{ak1} that correspond to the same forest $F_{v_{i}}$.
 Let us consider a fixed $i$. If $k-d_{v_i} < 0$ then  $a_{i,k-d_{v_i}} = 0 \leq a_{i,k-d_{v_i}+1}$. Let $k-d_{v_i} \geq 0$. If $v_{i}$ is not a leaf of $F$ then we have $d_{v_{i}}  \geq 1$ and 
 \[k-d_{v_i}+1 \leq \lfloor \frac{n-1}{2}\rfloor - d_{v_i}= \lfloor \frac{n - 1- 2d_{v_i} }{2}\rfloor  \leq  \lfloor \frac{e_{i} +1 -d_{v_{i}}}{2}\rfloor \leq \lfloor \frac{e_{i} }{2}\rfloor.\]  If $d_{v_{i}}  =0$, $v_{i}$ is a leaf and hence not a root of $F$. If additionally if $k +1 \leq \lfloor \frac{n-2}{2}\rfloor$, then we have  \[k-d_{v_i}+1 = k+1 \leq \lfloor \frac{n-2}{2}\rfloor = \lfloor \frac{e_{i} }{2}\rfloor.\] So, in both of these cases $a_{i,k-d_{v_i}}$ and $a_{i,k-d_{v_i}+1}$ are in the first half of the sequence $a_{i,0}, \ldots, a_{i,e_{i}}$ and, by the inductive hypothesis, $a_{i,k-d_{v_i}} \leq a_{i,k-d_{v_i}+1}$. Finally, if $d_{v_{i}} = 0$ and  $\lfloor \frac{n-2}{2}\rfloor < k+1 \leq \lfloor \frac{n-1}{2}\rfloor$ then $n$ is odd and $k+1 =  \frac{n-1}{2} = \frac{e_{i}+1}{2}$, $k = \frac{e_{i}-1}{2}$ and thus, by Lemma~\ref{cor: symmetric} we have $a_{i,k} =  a_{i,k+1}$, i.e., $a_{i,k-d_{v_i}} =  a_{i,k-d_{v_i}+1}$. 
\end{proof}

 \section{Central Limit Theorem}\label{sec: normal}

For a random variable $Z$, let \[\tilde{Z} = \frac{Z-\E(Z) }{\sqrt{\Var(Z)}}.\] We write $Z_n \to N(0,1)$ to mean that $Z_n$ converges in distribution to the standard normal distribution.

Consider the random variable $X_n$ which is counting the number of descents in a randomly generated labeling of a fixed tree $T_{n}$ of size $n$. In this section, we show  that under some assumptions on the maximum degrees of the trees $\{T_{n}\}_{n \geq 1}$, $\tilde{X}_n \to N(0,1)$ as $n \to \infty$.

\begin{theorem}\label{thm: main des}
Let $\{T_n\}_{n \geq 1}$ be a sequence of trees of size $n$ and  $X_n$ be the random variable that counts the number of descents in a random labeling of $T_n$. If $D_n \leq  Cn^{\frac{1}{2}-\epsilon}$ for some  constant $C$ and some $0<\epsilon<\frac{1}{2}$ where $D_n$ is the maximum down-degree in the tree $T_n$, then $\tilde{X_n} \to N(0,1)$.
\end{theorem}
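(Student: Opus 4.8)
The plan is to write $X_n$ as a sum of edge indicators and apply the method of cumulants (higher semi-invariants) for sums with a sparse dependency graph. For each edge $e$ of $T_n$ joining a child to its parent, let $I_e$ be the indicator that $e$ is a descent, so $X_n=\sum_e I_e$ and $\E(I_e)=\tfrac12$. The structural fact I would use is that the relative order of the labels on two disjoint sets of vertices is independent; hence $I_e$ and $I_f$ are independent whenever the edges $e,f$ are vertex-disjoint. Thus $\{I_e\}$ has a dependency graph contained in the line graph of $T_n$, where two edges are adjacent exactly when they share a tree vertex. Since the joint cumulant of bounded variables vanishes as soon as the family splits into two mutually independent parts, $\kappa(I_{e_1},\dots,I_{e_k})=0$ unless $\{e_1,\dots,e_k\}$ is connected in the line graph, i.e. unless these edges form a subtree of $T_n$. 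I would then prove $\tilde{X}_n\to N(0,1)$ by showing that every cumulant of order $k\ge 3$ of the standardized variable tends to $0$; as the normal law is determined by its cumulants, this suffices.

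First I would pin down the variance precisely, since the crude bound $\Var(X_n)\ge cn$ turns out to be too weak. Computing the covariance of each of the three configurations of a pair of edges --- disjoint ($\mathrm{Cov}=0$), sibling edges sharing a parent ($\mathrm{Cov}=+\tfrac1{12}$, from $P(\text{both descents})=\tfrac13$), and two consecutive edges on a path ($\mathrm{Cov}=-\tfrac1{12}$, from $P(\text{both descents})=\tfrac16$) --- and summing gives, after simplification,
\[\Var(X_n)=\tfrac1{12}\sum_{v}d_v^2+\tfrac16\, d_{\mathrm{root}},\]
where $d_v$ is the down-degree of $v$; in particular $\Var(X_n)\ge \tfrac1{12}\sum_v d_v^2\ge\tfrac{n-1}{12}$ (this matches $(n+1)/12$ for a path and $(n^2-1)/12$ for a star). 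The essential point is to carry the sharper lower bound $\Var(X_n)\gtrsim\sum_v d_v^2$ rather than merely $\gtrsim n$.

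Next, writing $N_k$ for the number of $k$-edge subtrees of $T_n$, the vanishing of disconnected joint cumulants together with the boundedness of the $I_e$ gives $|\kappa_k(X_n)|\le C_k\sum_{j\le k}N_j$. The crux of the argument is the estimate
\[N_k\le C_k\, D_n^{\,k-2}\sum_v d_v^2\qquad(k\ge 2),\]
which I would prove by induction on $k$. The base case $k=2$ is immediate from $N_2=\sum_v\binom{\deg v}{2}=\Theta\!\big(\sum_v d_v^2\big)$, where $\deg v$ is the total degree. For the inductive step, every $k$-edge subtree is obtained from some $(k-1)$-edge subtree by attaching one pendant edge, and a fixed $(k-1)$-edge subtree has $k$ vertices, each of total degree at most $D_n+1$, so it admits at most $k(D_n+1)$ such extensions; hence $N_k\le k(D_n+1)N_{k-1}$ and the bound follows. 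Combined with the previous display this yields $|\kappa_k(X_n)|\le C_k'\,D_n^{\,k-2}\sum_v d_v^2$.

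Finally, combining the two estimates and using $\Var(X_n)\gtrsim\sum_v d_v^2$,
\[|\kappa_k(\tilde{X}_n)|=\frac{|\kappa_k(X_n)|}{\Var(X_n)^{k/2}}\le C_k''\left(\frac{D_n^2}{\sum_v d_v^2}\right)^{(k-2)/2}\le C_k''\left(\frac{D_n^2}{n-1}\right)^{(k-2)/2},\]
and since $D_n\le Cn^{1/2-\epsilon}$ the right-hand side is $O\!\big(n^{-\epsilon(k-2)}\big)\to 0$ for every $k\ge 3$. Thus all higher cumulants of $\tilde{X}_n$ vanish in the limit while $\kappa_1=0$ and $\kappa_2=1$, and the method of cumulants gives $\tilde{X}_n\to N(0,1)$. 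I expect the main obstacle to be the subtree-counting estimate for $N_k$, and, more subtly, the realization that one must keep the exact variance $\Var(X_n)\asymp\sum_v d_v^2$ throughout: the factor $\sum_v d_v^2$ has to cancel between numerator and denominator, leaving the ratio $D_n^2/\sum_v d_v^2\le D_n^2/(n-1)$, which is precisely what the hypothesis on $D_n$ controls. (Using only $\Var(X_n)\ge cn$ and $\sum_v d_v^2\le D_n(n-1)$ would fail for $k=3$ when $\epsilon$ is small, as the balanced $D_n$-ary tree shows.)
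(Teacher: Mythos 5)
Your proposal is correct, but it routes around the paper's main probabilistic tool rather than reproducing it. The setup is identical to the paper's: they too decompose $X_n$ into edge indicators, note that indicators on vertex-disjoint edges are independent so that the line graph of $T_n$ serves as a dependency graph (with $\Delta_n \leq 2D_n$), and compute the exact variance $\Var(X_n) = \frac{2d_r + \sum_v d_v^2}{12}$ by the same three-case covariance analysis (your $+\frac{1}{12}$ for sibling edges and $-\frac{1}{12}$ for consecutive edges match their $\E(X_{n,k_1}X_{n,k_2}) = \frac13$ and $\frac16$ cases). The divergence is in the limit theorem itself: the paper invokes Janson's dependency criterion as a black box, verifying $N_n\Delta_n^{m-1}(A_n/\sigma_n)^m \to 0$ with only the crude counts $N_n = n-1$, $A_n=1$ and the variance bound $\sigma_n^2 \geq \frac{n+D_n^2-D_n+1}{12}$; the free parameter $m$, chosen so that $\epsilon > \frac{1}{2(m-1)}$, absorbs all the slack. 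Your method-of-cumulants argument has no such free parameter --- every fixed order $k\geq 3$, in particular $k=3$, must vanish --- so you are forced to prove the sharper estimate $|\kappa_k(X_n)| \leq C_k D_n^{k-2}\sum_v d_v^2$ via your subtree-counting lemma $N_k \leq k(D_n+1)N_{k-1}$ and to carry $\Var(X_n) \asymp \sum_v d_v^2$ through the computation so that this factor cancels. In effect you have inlined, and sharpened, the cumulant machinery on which Janson's criterion is itself based (the cited result is proved by higher semi-invariants). The paper's route buys brevity and minimal bookkeeping; yours buys a self-contained proof with explicit decay rates $|\kappa_k(\tilde{X}_n)| = O(n^{-\epsilon(k-2)})$, which is more information than bare convergence in distribution. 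Your closing diagnosis is exactly right: with only $\Var(X_n) \gtrsim n$ the third cumulant cannot be controlled when $\epsilon < \frac14$, and it is precisely the freedom to take $m$ large in Janson's criterion that lets the paper get away with the cruder bounds. The only small gaps, all routine, are that the bound $|\kappa_k(X_n)| \leq C_k\sum_{j\leq k}N_j$ should absorb the $j=1$ term via $N_1 = n-1 \leq \sum_v d_v^2$, and that the standard facts you lean on (vanishing of mixed cumulants over split-independent families, uniform boundedness of joint cumulants of variables bounded by $1$, and moment-determinacy of the normal law) deserve explicit citation.
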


The proof uses the Janson's dependency criterion~\cite{JANSON1988} that is stated in terms of a dependency graph as follows.  Let $\{Y_{k}\mid k=1,2,\ldots, N\}$ be a finite set of random variables. Then a graph $G$ is a dependency graph for $\{Y_{k}\mid k=1,2,\ldots, N\}$ if the following conditions are satisfied:
\begin{enumerate}
\item There exists a bijection between the random variables $\{Y_{k}\mid k=1,2,\ldots, N\}$ and the vertices of $G$, and
\item if $V_1$ and $V_2$ are disjoint sets of vertices of $G$ such that no edge of $G$ has one endpoint in $V_1$ and another one in $V_2$, then the corresponding sets of random variables are independent.
\end{enumerate}
Note that the dependency graph for a finite set of random variables is, in general, not unique because if the graph is not complete one can add another edge to obtain a new dependency graph. We can now state Janson's dependency criterion.

\begin{theorem}[\cite{JANSON1988}]\label{thm: Janson criteria}
Let $Y_{n,k}$ be an array of random variables such that for all $n$, and for all $k=1,\ldots,N_n$, the inequality $|Y_{n,k}| \leq A_n$ holds for some real number $A_n$, and that the maximum degree of a dependency graph of $\{Y_{n,k} \mid k=1,\ldots,N_n\}$ is $\Delta_n$. Set $Y_{n} = \sum_{k=1}^{N_n} Y_{n,k}$ and $\sigma_n^2 = \Var(Y_n)$. If there is a natural number $m$ so that 
\begin{equation}
N_n \Delta_n^{m-1} \left( \frac{A_n}{\sigma_n}\right)^m \to 0,
\end{equation}
then
\[\tilde{Y}_n \to N(0,1).\]
\end{theorem}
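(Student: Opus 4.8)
The plan is to write $X_n$ as a sum of bounded indicators indexed by the edges of $T_n$ and then to verify the hypotheses of Janson's criterion (Theorem~\ref{thm: Janson criteria}). Orient each edge of $T_n$ from parent to child, and for an edge $e$ with parent $p(e)$ and child $c(e)$ set
\[ Y_e = \mathbf{1}\!\left[w(c(e)) > w(p(e))\right], \qquad X_n = \sum_{e} Y_e. \]
Then $X_n$ is a sum of $N_n = n-1$ variables, each satisfying $|Y_e| \leq 1 =: A_n$, and by the symmetry $w \mapsto w'$ of Lemma~\ref{cor: symmetric} we have $\E(Y_e) = \tfrac12$, so $\E(X_n) = (n-1)/2$. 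The point of the edge-decomposition is that $Y_e$ depends only on the relative order of the random labeling $w$ on the two endpoints of $e$.

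Next I would build a dependency graph $G$ whose vertices are the edges of $T_n$, joining $e$ and $e'$ precisely when they share an endpoint in $T_n$. To see this is a valid dependency graph, note that if two families of edges cover disjoint vertex sets $S_1, S_2 \subseteq V(T_n)$, then the relative order of $w$ on $S_1$ is independent of the relative order on $S_2$; this can be seen by generating the relative orders via i.i.d.\ continuous random variables attached to the vertices, so that the two families of indicators become functions of disjoint independent blocks. Since an edge $e = (u,v)$ shares an endpoint only with the at most $\deg(u) + \deg(v) - 2$ other edges incident to $u$ or $v$, and every total degree is at most $D_n + 1$, the maximum degree of $G$ satisfies $\Delta_n \leq 2D_n \leq 2Cn^{\frac12 - \epsilon}$.

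The heart of the argument, and the step I expect to require the most care, is a lower bound $\sigma_n^2 = \Var(X_n) = \Omega(n)$ that holds uniformly over all trees, because the covariances do not all have the same sign. Vertex-disjoint edge pairs are independent, so only two local configurations contribute a nonzero covariance: a \emph{cherry} (two edges sharing their common parent) gives $\Cov(Y_e,Y_{e'}) = \tfrac13 - \tfrac14 = \tfrac1{12} > 0$, while a \emph{path} (parent--child--grandchild) gives $\Cov(Y_e,Y_{e'}) = \tfrac16 - \tfrac14 = -\tfrac1{12} < 0$. Counting $\binom{d_v}{2}$ cherries centered at each vertex $v$ and $d_v$ paths through each non-root vertex $v$ of down-degree $d_v$, and using $\sum_v d_v = n-1$, the competing off-diagonal contributions collapse and the diagonal survives:
\[ \Var(X_n) = \frac{n-1}{12} + \frac16\sum_{v}\binom{d_v}{2} + \frac{d_{\mathrm{root}}}{6} \;\geq\; \frac{n-1}{12}. \]
Thus the negative path-covariances cannot overwhelm the variance, and $\sigma_n \geq \sqrt{(n-1)/12}$ regardless of the degrees.

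Finally I would substitute these estimates into Theorem~\ref{thm: Janson criteria}. With $N_n \leq n$, $A_n = 1$, $\Delta_n \leq 2Cn^{\frac12-\epsilon}$, and $\sigma_n \geq \sqrt{(n-1)/12}$, the quantity $N_n \Delta_n^{m-1}(A_n/\sigma_n)^m$ is bounded by a constant (depending on $m$ and $C$) times $n^{\frac12 - \epsilon(m-1)}$. Choosing any fixed integer $m$ with $m-1 > \frac{1}{2\epsilon}$ makes this exponent negative, so the quantity tends to $0$ and Janson's criterion yields $\tilde{X}_n \to N(0,1)$. Note that the degree hypothesis $D_n = O(n^{\frac12-\epsilon})$ enters only through the bound on $\Delta_n$; the linear variance lower bound is valid for every tree.
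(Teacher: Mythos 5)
The quoted statement is Janson's dependency criterion itself, which the paper cites from \cite{JANSON1988} without proof; your write-up does not prove it, it uses it. What you have written is a proof of Theorem~\ref{thm: main des}: you decompose $X_n$ into edge indicators and then, in your own words, ``verify the hypotheses of Janson's criterion,'' invoking Theorem~\ref{thm: Janson criteria} as a black box in the final step. As a proof of the assigned statement this is circular, and that is the genuine gap: nothing in your argument addresses why the condition $N_n\Delta_n^{m-1}(A_n/\sigma_n)^m \to 0$ forces asymptotic normality for an \emph{arbitrary} array of bounded variables with a dependency graph. An actual proof runs through higher cumulants (semiinvariants): one bounds the $k$-th cumulant by $|\kappa_k(Y_n)| \leq C_k N_n \Delta_n^{k-1} A_n^k$ using the dependency structure, observes that since $\Delta_n \leq N_n$ the hypothesis for one $m$ propagates to all orders $k \geq m$ via $N_n\Delta_n^{k-1}(A_n/\sigma_n)^k \leq \left(N_n\Delta_n^{m-1}(A_n/\sigma_n)^m\right)^{k/m}$, and then concludes that vanishing of all normalized cumulants of sufficiently high order, together with $\E(\tilde Y_n)=0$ and $\Var(\tilde Y_n)=1$, yields $\tilde Y_n \to N(0,1)$. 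None of this machinery appears in your proposal.

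That said, judged as a blind proof of Theorem~\ref{thm: main des} --- the theorem in the paper for which this statement is actually deployed --- your argument is correct and essentially coincides with the paper's: the same edge indicators, the same line graph as dependency graph with $\Delta_n \leq 2D_n$ (your justification of independence across vertex-disjoint edge sets via i.i.d.\ continuous variables is a nice touch the paper leaves implicit), and your variance expression $\frac{n-1}{12}+\frac{1}{6}\sum_{v}\binom{d_v}{2}+\frac{d_r}{6}$ is algebraically identical to the paper's $\frac{2d_r+\sum_{v}d_v^2}{12}$ from Lemma~\ref{lem: var}, as one sees using $\sum_v d_v = n-1$. Your simpler lower bound $\sigma_n^2 \geq (n-1)/12$ is weaker than the paper's $\sigma_n^2 \geq (n+D_n^2-D_n+1)/12$ but suffices, and your choice of any integer $m$ with $m-1 > 1/(2\epsilon)$ is exactly the paper's condition $\epsilon > \frac{1}{2(m-1)}$. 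So if the intended target had been Theorem~\ref{thm: main des}, this would be a correct proof taking essentially the paper's route; as a proof of Theorem~\ref{thm: Janson criteria} it assumes precisely what is to be shown.
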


For each tree $T_{n}$ in the sequence, fix an ordering of its edges.  To apply Janson's criterion, let $Y_{n,k}$ be the indicator random variables $X_{n,k}$ of the event that the edge $k$ corresponds to a descent in a randomly selected labeling of $T_n$. Thus $N_n = n-1$, the number of edges in a tree of size $n$. By the definition of $Y_{n,k}$, we have $|Y_{n,k}|\leq 1$ so we will set $A_n = 1$.

Next we will look at a dependency graph $G$ for the random variables $X_{n,k}$ to get a bound on $\Delta_n$. The variables $X_{n,k_1}$ and $X_{n,k_2}$ are independent  if the edges $k_1$ and $k_2$ do not share a vertex. Therefore, we can take the dependency graph for $X_{n,k}$ to be the line graph of $T_{n}$: each vertex in  $G$ corresponds to an edge from the tree $T_n$ and two vertices of $G$ are adjacent if the corresponding edges of $T_n$ share an endpoint. Figure \ref{fig: dependency} shows this dependency graph for the tree $T$ in Figure \ref{fig: des poly example}. Let $D_n$ denote the largest down-degree of a vertex in the tree $T$, then $\Delta_n \leq 2D_n$.

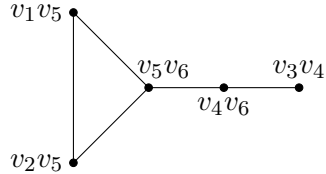
\begin{figure}[h]
\centering
\begin{tikzpicture}
\draw (0,0)--(0,2)--(1,1)--(2,1)--(3,1);
\draw (0,0)--(1,1);
\draw [fill] (0,0) circle [radius=0.05];
\draw [fill] (0,2) circle [radius=0.05];
\draw [fill] (1,1) circle [radius=0.05];
\draw [fill] (2,1) circle [radius=0.05];
\draw [fill] (3,1) circle [radius=0.05];
\node [left] at (0,0) {$v_2v_5$};
\node [left] at (0,2) {$v_1v_5$};
\node [above] at (1.2,1) {$v_5v_6$};
\node [below] at (2,1) {$v_4v_6$};
\node [above] at (3,1) {$v_3v_4$};
\end{tikzpicture}
\caption{Dependency graph $G$ of the tree $T$ in Figure \ref{fig: des poly example}} \label{fig: dependency}   
\end{figure}

\begin{lemma}\label{lem: var}
For a tree $T$ of size $n$ with root $r$,
\[\E(X_n) = \frac{n-1}{2} \hspace{1cm} \text{and} \hspace{1cm}  \Var(X_n) = \frac{2d_r+\sum_{v \in T} d_v^2}{12},\] where $d_v$ is the down-degree of vertex $v$.
\end{lemma}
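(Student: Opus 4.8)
The plan is to compute both moments using the indicator decomposition $X_n = \sum_{k=1}^{n-1} X_{n,k}$ already introduced, where $X_{n,k}$ is the indicator that edge $k$ is a descent. For the expectation, I would first observe that for any single edge $\{u,v\}$ with $v$ the child of $u$, the event ``$v$ is a descent'' depends only on the relative order of the two labels $w(u)$ and $w(v)$, which are two distinct values chosen uniformly; by symmetry $\E(X_{n,k}) = \tfrac12$. Linearity of expectation then gives $\E(X_n) = (n-1)/2$ immediately, matching the palindromicity of $A_F(q)$ established in Lemma~\ref{cor: symmetric}.

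For the variance, I would use $\Var(X_n) = \sum_k \Var(X_{n,k}) + \sum_{k \neq \ell} \mathrm{Cov}(X_{n,k}, X_{n,\ell})$. Each $\Var(X_{n,k}) = \tfrac14$. The key structural fact is that $\mathrm{Cov}(X_{n,k},X_{n,\ell}) = 0$ whenever edges $k$ and $\ell$ share no vertex, since the four labels involved are then exchangeable and the two indicators are independent (this is exactly the independence used to justify the line-graph dependency graph). So only pairs of edges sharing a common vertex contribute. I would split these adjacent pairs into two types according to how they meet at the shared vertex $x$: (i) both edges go downward from $x$ (i.e.\ $x$ is the parent in both, so the two edges lead to two distinct children of $x$), and (ii) one edge goes up from $x$ to its parent and the other goes down to a child of $x$. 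The plan is to compute the relevant covariance for each type by conditioning on the relative ranks of the three labels at the three vertices of the path, each of the $3! = 6$ orderings being equally likely.

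In type (i), both indicators record whether a child-label exceeds the shared parent-label; a short rank computation over the $6$ orderings of the three labels gives $\E(X_{n,k} X_{n,\ell})$ and hence a positive covariance of $\tfrac{1}{12}$. In type (ii), the middle vertex $x$ plays ``child'' in one edge and ``parent'' in the other, and the analogous computation yields a negative covariance of $-\tfrac{1}{12}$. I would then count the adjacent pairs of each type in terms of down-degrees: a vertex $v$ with down-degree $d_v$ contributes $\binom{d_v}{2}$ unordered type-(i) pairs (counted with the ordered factor this is $d_v(d_v-1)$), and contributes $d_v$ type-(ii) pairs with its own parent edge when $v$ is not the root. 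Assembling
\[
\Var(X_n) = \frac{n-1}{4} + \frac{1}{12}\sum_{v} d_v(d_v-1) - \frac{1}{12}\sum_{v \neq r} 2 d_v,
\]
and simplifying using $\sum_v d_v = n-1$ (each edge is the down-edge of exactly one vertex) should collapse to the stated $\tfrac{1}{12}\bigl(2d_r + \sum_v d_v^2\bigr)$; the root term $2d_r$ survives precisely because the type-(ii) negative contribution is absent at the root. The main obstacle is bookkeeping rather than ideas: correctly distinguishing the two adjacency types, getting the signs and the ordered-versus-unordered counting factors right in the two covariance sums, and verifying that the constant and linear pieces cancel to leave exactly the claimed closed form.
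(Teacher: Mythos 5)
Your proposal is correct and follows essentially the same route as the paper: the paper computes $\Var(X_n)$ as the ordered double sum $\sum_{k_1,k_2}\bigl(\E(X_{n,k_1}X_{n,k_2})-\E(X_{n,k_1})\E(X_{n,k_2})\bigr)$, which is exactly your covariance decomposition, with $\E(X_{n,k_1}X_{n,k_2})=\tfrac{1}{3}$ for sibling pairs and $\tfrac{1}{6}$ for chain pairs corresponding to your covariances $\pm\tfrac{1}{12}$, and with the same ordered counts $d_v(d_v-1)$ and $2d_v$ for $v\neq r$. Your intermediate expression and the final simplification using $\sum_v d_v = n-1$ match the paper's computation line for line.
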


\begin{proof}
Fix an ordering of the edges of the tree $T$ and let  $X_{n,k}$, $k=1, \cdots, n-1$ be the indicator random variable for whether the vertices on edge $k$ create a descent. Then $X_{n}  = \sum_{k=1}^{n-1} X_{n,k}$. $\E(X_{n,k}) = \frac{1}{2}$ because, as we saw in the proof of Lemma~\ref{cor: symmetric}, two vertices on a same edge create a descent in half of the labelings of a given tree $T$, so the formula for $\E(X_n)$ easily follows.
\begin{align}
\Var(X_n) & = \E(X_n^2)-(\E(X_n)^2)\\
& = \E\left(\left(\sum_{k=1}^{n-1} X_{n,k}\right)^2\right) - \left(\E\left(\sum_{k=1}^{n-1} X_{n,k}\right)\right)^2\\
& = \E\left(\left(\sum_{k=1}^{n-1} X_{n,k}\right)^2\right) - \left(\sum_{k=1}^{n-1}\E(X_{n,k})\right)^2\\
& = \sum_{k_1,k_2} \E(X_{n,k_1}X_{n,k_2}) - \sum_{k_1,k_2}\E(X_{n,k_1})\E(X_{n,k_2}),\label{eq: var}
\end{align}
where the last two sums run over all ordered pairs $(k_1,k_2)\in \{1,\ldots,n-1\}\times \{1,\ldots,n-1\}$.

Now, since $\E(X_{n,k}) = \frac{1}{2}$, the $\E(X_{n,k_1})\E(X_{n,k_2})$ terms appearing in \eqref{eq: var} are all equal to $\frac{1}{4}$. We will now calculate the values for the $\E(X_{n,k_1}X_{n,k_2})$ terms in \eqref{eq: var}. If the edges $k_1$ and $k_2$ do not share a vertex, then they are independent and we get $\E(X_{n,k_1}X_{n,k_2}) = \E(X_{n,k_1})\E(X_{n,k_2}) = \frac{1}{4}$, and if $k_1 = k_2$ then $\E(X_{n,k_1}X_{n,k_2}) = \E(X_{n,k_1}^2) = \E(X_{n,k_1}) = \frac{1}{2}$. Let $k_1$ be the edge $v_iv_j$ and $k_2$ be the edge $v_sv_t$, with $i <j$ and $s<t$. There are three cases left to consider: if $j=t$ we have the case shown in Figure \ref{subfig: a}, if $j=s$ we have the case in Figure \ref{subfig: b}, and if $i=t$ we have the case shown in Figure \ref{subfig: c}. If $j=t$, then $\E(X_{n,k_1}X_{n,k_2}) = \frac{1}{3}$ since $X_{n,k_1}=X_{n,k_2}=1$ if and only if $w(v_i)<w(v_j)$ and $w(v_i)<w(v_s)$. There are 6 ways to order fixed values of the three labels and two of them satisfy that requirement, so $\E(X_{n,k_1}X_{n,k_2}) = \frac{2}{6}=\frac{1}{3}$. Similarly, if $j=s$ or $i=t$ we see $\E(X_{n,k_1}X_{n,k_2}) = \frac{1}{6}$.

Next, we will count how many of the terms $\E(X_{n,k_1}X_{n,k_2})$ in~\eqref{eq: var} are $\frac{1}{2}$, $\frac{1}{3}$, or $\frac{1}{6}$.
\begin{itemize}
\item We know that $\E(X_{n,k_1}X_{n,k_2}) = \frac{1}{2}$ only when $k_1=k_2$ and therefore this occurs $n-1$ times, once for each edge.
\item If $\E(X_{n,k_1}X_{n,k_2})=\frac{1}{3}$, we have the case from Figure \ref{subfig: a}.  For each vertex $v$ in the tree $T$ this will occur $d_v(d_v-1)$ times since we have $d_v$ choices for the first child and then $d_v-1$ choices for the second. This occurs a total of $\sum_{v\in  T}d_v(d_v-1)$ times in the tree $T$.
\item Lastly, if $\E(X_{n,k_1}X_{n,k_2})=\frac{1}{6}$, we have the cases shown in Figures \ref{subfig: b} and \ref{subfig: c}. If $r$ is the root of $T$, then for each vertex $v \neq r$ in $T$ this occurs $2d_v$ times. There are $d_{v}$ choices for the lower edge and one choice for the upper edge, and the edges could appear in either order in the product. Therefore this case appears $\sum_{v \neq r} 2d_v$ times throughout the tree $T$.
\end{itemize}
Plugging this information into \eqref{eq: var}, we get 
\begin{align}
\Var(X_n) & = \left(\frac{1}{2}-\frac{1}{4}\right)(n-1) + \left(\frac{1}{3}-\frac{1}{4}\right)\sum_{v\in T}d_v(d_v-1)+\left(\frac{1}{6}-\frac{1}{4}\right)\sum_{\substack{ v\neq r}}2d_v\\
&= \frac{1}{4}(n-1)+\frac{1}{12}\sum_{v\in T}d_v(d_v-1) -\frac{1}{12}\sum_{\substack{ v\neq r}}2d_v\\
& = \frac{3(n-1)+2d_r+\sum_{v \in T} d_v^2 -3\sum_{v\in T} d_v }{12}\\
& = \frac{2d_r+\sum_{v\in T}d_v^2 }{12}.
\end{align}
\end{proof}

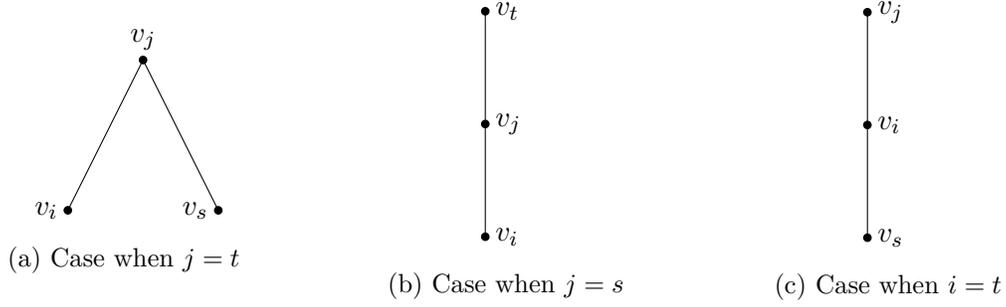
\begin{figure}
\centering
\begin{subfigure}{0.3\textwidth}
\centering
\begin{tikzpicture}
\draw (0,0)--(1,2)--(2,0);
\draw [fill] (0,0) circle [radius=0.05];
\draw [fill] (1,2) circle [radius=0.05];
\draw [fill] (2,0) circle [radius=0.05];
\node [left] at (0,0) {$v_i$};
\node [above] at (1,2) {$v_j$};
\node [left] at (2,0) {$v_s$};
\end{tikzpicture}
\caption{Case when $j=t$}\label{subfig: a}
\end{subfigure}
\begin{subfigure}{0.3\textwidth}
\centering
\begin{tikzpicture}
\draw [fill] (0,-.5) circle [radius=0.05];
\draw [fill] (0,1) circle [radius=0.05];
\draw [fill] (0,2.5) circle [radius=0.05];
\node [right] at (0,2.5) {$v_t$};
\node [right] at (0,1) {$v_j$};
\node [right] at (0,-.5) {$v_i$};
\draw (0,-.5)--(0,2.5);
\end{tikzpicture}
\caption{Case when $j=s$}\label{subfig: b}
\end{subfigure}
\begin{subfigure}{0.3\textwidth}
\centering
\begin{tikzpicture}
\draw [fill] (0,-.5) circle [radius=0.05];
\draw [fill] (0,1) circle [radius=0.05];
\draw [fill] (0,2.5) circle [radius=0.05];
\node [right] at (0,2.5) {$v_j$};
\node [right] at (0,1) {$v_i$};
\node [right] at (0,-.5) {$v_s$};
\draw (0,-.5)--(0,2.5);
\end{tikzpicture}
\caption{Case when $i=t$}\label{subfig: c} 
\end{subfigure}
\caption{The cases where the edges $k_1=v_iv_j$ and $k_2=v_sv_t$ are adjacent} \label{fig: expectation example}
\end{figure}

Using the variance calculated in Lemma~\ref{lem: var} and the values we found for $N_n$, $\Delta_n$, and $A_n$ we can now apply Janson's criterion to prove Theorem~ \ref{thm: main des}.

\begin{proof}[Proof of Theorem~\ref{thm: main des}]
In Lemma \ref{lem: var}, we showed that $\Var(X_n) = \frac{2d_r+\sum_{v\in T_{n}}d_v^2 }{12}$. At least one of the vertices in the tree must have down-degree $D_n$. If $v^*$ is one such vertex, then we get

\begin{align}\label{eq: var estimate}
\begin{split}
\Var(X_n) & =  \frac{2d_r+\sum_{v\in T_{n}}d_v^2 }{12} = \frac{2d_r+ D_n^2 + \sum_{v\neq v^*}d_v^2}{12} \geq \frac{2d_r+ D_n^2 + \sum_{v\neq v^*}d_v}{12} \\
&= \frac{2d_r+D_n^2+(n-1-D_n)}{12} \geq \frac{D_n^2+n-1-D_n+2}{12} = \frac{n+D_n^2-D_n+1}{12}.
\end{split}
\end{align}
Note that this bound is tight when the maximum degree does not appear at the root, and the rest of the vertices have down-degree one. 

To apply Janson's criterion with $N_n = n-1$, $\Delta_n \leq 2D_n$, $A_n = 1$, and the estimate \eqref{eq: var estimate}, we need to show there is a natural number $m$ such that 
\begin{equation*}
(n-1)(2D_n)^{m-1} \left(\frac{12}{n+D_n^2-D_n+1}\right)^{\frac{m}{2}} \to 0.
\end{equation*}
It suffices to show that there is a natural number $m$ such that 
\begin{equation}\label{eq: lim}
\frac{nD_n^{m-1}}{(n+D_n^2-D_n+1)^{\frac{m}{2}}} = \frac{\frac{nD_n^{m-1}}{n^\frac{m}{2}}} { (1+\frac{D_n^2}{n} -\frac{D_n}{n} + \frac{1}{n})^{\frac{m}{2}}}   \to 0.
\end{equation}
Under the assumptions about the growth the maximal degrees, we have 
\begin{equation*}
\frac{D_n}{n}\leq \frac{D_n^2}{n} \leq \frac {C^2(n^{\frac{1}{2}-\epsilon})^2}{n} \to 0
\end{equation*}
for $0<\epsilon<\frac{1}{2}$.
This means that 
\begin{equation}\label{eq: num}
\frac{nD_n^{m-1}}{n^{\frac{m}{2}}} = \frac{D_n^{m-1}}{n^{\frac{m}{2}-1}} \leq \frac{C^{m-1}n^{(\frac{1}{2}-\epsilon)(m-1)}}{n^{\frac{m}{2}-1}} \to 0
\end{equation}
if $\frac{m}{2}-1>(\frac{1}{2}-\epsilon)(m-1)=\frac{1}{2}m-\frac{1}{2}-\epsilon(m-1)$. In other words, \eqref{eq: num} holds if $\epsilon > \frac{1}{2(m-1)}$. 

Note that $\frac{1}{2(m-1)}$ approaches 0 as $m\to\infty$ and thus for any $0<\epsilon<\frac{1}{2}$, there is some $m\geq 3$ such that \eqref{eq: lim} holds. Therefore, by Janson's criterion, we get that $\tilde{X_n} \to N(0,1)$. 
\end{proof}

\bibliographystyle{plain}

\end{document}